\numberwithin{equation}{section}
\def\R{{\mathbb R}}
\newtheorem{theo}{Theorem}[section]
\newtheorem{lemm}{Lemma}[section]
\newtheorem{coro}{Corollary}[section]
\newtheorem{defi}{Definition}[section]
\def\begeq{\begin{equation}}
\def\endeq{\end{equation}}
\def\p{\partial}
\def\lf{\left}
\def\ri{\right}
\def\R{\Bbb R}
\begin{document}

\title{A property of the Brown-York mass in
 Schwarzschild manifolds}
\author{Xu-Qian Fan$^\dagger $ \&   Kwok-Kun Kwong*}
\thanks{$^\dagger $Research partially supported by the National
Natural Science Foundation of China (10901072, 11291240139) and GDNSF (S2012010010038).\\
*~Research partially supported by Australian Research Council Discovery Grant \#DP0987650}
\address{Department of
Mathematics, Jinan University, Guangzhou,
 510632,
P. R. China.} \email{txqfan@jnu.edu.cn}
\address{School of Mathematical Sciences, Monash University,
Vic 3800, Australia.} \email{Kwok-Kun.Kwong@monash.edu}

\renewcommand{\subjclassname}{%
  \textup{2000} Mathematics Subject Classification}
\subjclass[2000]{Primary 53C20; Secondary 83C99}
\date{Oct. 2012}
\keywords{Brown-York mass, ADM mass, isometric embedding.}

\begin{abstract}
We will extend partially our previous results about the limit of the Brown-York mass of
a family of convex revolution surfaces in the
 Schwarzschild manifold such that these surfaces may have unbounded ratios of their radii.
\end{abstract}
\maketitle\markboth{Xu-Qian Fan $\&$   Kwok-Kun Kwong }{A property
of the Brown-York mass}

\section{Introduction}
In this paper, we will continue to study the limiting behavior of the Brown-York mass of
a family of convex revolution surfaces in the
 Schwarzschild manifold and extend our previous results in \cite{FK}.
Throughout this paper, we will denote $(\mathbb{R}^3,\delta_{ij})$
as the 3-dimensional Euclidean space, $x^1,x^2,x^3$ as the standard
coordinates of $\mathbb{R}^3$, $r$ and $\p$ as the Euclidean distance and the standard
derivative operator on $\R^3$ respectively. For the sake of convenience, let us first recall some
definitions. First of all, we will adopt the following definition of
asymptotically flat manifolds.

\begin{defi}\label{defaf}
A complete three dimensional manifold $(M,\lambda)$ is said to be asymptotically
flat (AF) of order $\tau$ (with one end) if there is a compact
subset $K$ such that $M\setminus K$ is diffeomorphic to
$\R^3\setminus B_R(0)$ for some $R>0$ and in the standard
coordinates in $\R^3$, the metric $\lambda$ satisfies:
\begin{equation} \label{daf1}
\lambda_{ij}=\delta_{ij}+\sigma_{ij}
\end{equation}
with
\begin{equation} \label{daf2}
|\sigma_{ij}|+r|\p \sigma_{ij}|+r^2|\p\p\sigma_{ij}|+r^3|\p\p\p
\sigma_{ij}|=O(r^{-\tau}),
\end{equation}
for some constant $1\ge\tau>\frac{1}{2}$.
\end{defi}
A coordinate system of $M$ near infinity so that the metric tensor
in this system satisfy the above decay conditions is said to be
admissible. In such a coordinate system, we can define the ADM mass
as follows.
\begin{defi}
The Arnowitt-Deser-Misner (ADM) mass (see \cite{ADM}) of an
asymptotically flat manifold $(M,\lambda)$ is defined as:
\begin{equation} \label{defadm1}
m_{ADM}(M,\lambda)=\lim_{r\to\infty}\frac{1}{16\pi}\int_{S_r}
\lf(\lambda_{ij,i}-\lambda_{ii,j}\ri)\nu^jd\Sigma_r^0,
\end{equation}
where $S_r$ is the Euclidean sphere, $d\Sigma_r^0$ is the volume
element induced by the Euclidean metric, $\nu$ is the outward unit
normal of $S_r$ in $\R^3$ and the derivative is the ordinary partial
derivative.
\end{defi}
We always assume that the scalar curvature is in $L^1(M)$ so that
the limit exists in the definition. In \cite{BTK86}, Bartnik showed that the
ADM mass is a geometric invariant.

 On the other hand, there have been many studies on the relation between the
 ADM mass of an AF manifold and the so called quasi-local mass. Let us recall the
 definition of the Brown-York quasi-local mass.
 Suppose
$\lf(\Omega, \mu\ri)$ is a compact three dimensional manifold with
smooth boundary $\p \Omega$, if moreover $\p \Omega$ has positive Gauss
curvature, then the Brown-York mass of $\p\Omega$ is
defined as (see \cite{BY1,BY2}):
\begin{defi}
 \begin{equation} \label{defbym1}
m_{BY}\lf(\p \Omega\ri)=\frac{1}{8\pi}\int_{\p \Omega}(H_0-H)d\sigma
\end{equation}
 where  $H$ is the mean curvature of $\p\Omega$ with respect to the
 outward unit normal and
  the metric $\mu$, $d\sigma$ is the volume element induced on $\p\Omega$ by
 $\mu$ and $H_0$ is the mean curvature  of $\p \Omega$ when
  embedded in  $\R^3$.
\end{defi}

The existence of an isometric embedding in $\R^3$ for $\p\Omega$ was
proved by Nirenberg \cite{Niren}, the uniqueness of  the embedding
was given by \cite{Herglotz43,Sacksteder62,PAV}, so the Brown-York
mass is well-defined.

It can be proved that the Brown-York mass and the Hawking quasi-local mass
\cite{SWH} of
the coordinate spheres tends to the ADM mass in some AF manifolds,
see \cite{BY2,HKGH1,BBWYY,BLP,ST02,FST}, even of nearly round
surfaces \cite{SWW}, and of a family of convex revolution surfaces in an asymptotically
 Schwarzschild manifold \cite{FK} for the Brown-York mass. The ratios of the radii of these surfaces are all bounded. In this paper, we will generalize some results in \cite{FK} partially in that
 we allow the ratios of the radii of the family of surfaces to be unbounded.
We will consider a kind of AF manifolds, called
 Schwarzschild manifolds, defined as follows:
\begin{defi}\label{def: metric}
$(N,g)$ is called a
 Schwarzschild manifold if
   $N=\mathbb{R}^3\setminus K$,  $K$ is a compact set containing the origin, and in the standard coordinates of $\mathbb R^3$,
  \begin{equation*}
 g_{ij}=\phi^4 \delta_{ij},
  \phi=1+\frac{2m}{r},
    m>0.
  \end{equation*}
 \end{defi}
  Clearly, $(N,g)$ is an AF manifold and the scalar curvature of $(N,g)$ is zero \cite{HY} (page 283). Moreover, the ADM mass is equal to $m$.

Let $w(\varphi), z(\varphi)$ be smooth functions on $[0, l]$ such that the surface of revolution generated by $w$ and $z$:
\begin{equation}\label{eq: rev}
(w(\varphi)\cos \theta, w(\varphi)\sin \theta, z(\varphi))
\end{equation}
is a smooth convex surface diffeomorphic to $\mathbb{S}^2$ and
\begin{equation}\label{eq: ass}
\left\{
\begin{array}{ll}
&C_1^2\leq w^2+z^2\leq C_2^2, \textrm{ for }C_1, C_2>0\\
&w'^2+z'^2=1\\&w\geq 0 \textrm{ on } [0,l] \textrm{ and } z(0)> z(l).
\end{array}
\right.
\end{equation}

Let $f(a)$ be a function such that $f(a)\geq 1$ for all $a\geq 1$.
We define the family of surfaces $S_a$ by the parametrization
\begin{equation}
(a w(\varphi) \cos \theta, a w(\varphi)\sin \theta, a h_a(\varphi))
\end{equation}
where $h_a(\varphi)=f(a) z(\varphi)$. Note that $S_a$ forms an exhaustion of $N$ as $a\to \infty$.

We will  prove the following:

\begin{theo}\label{thm1}
Suppose $(S_a, g|_{S_a})$ has positive Gaussian curvature, then the Brown York mass of $S_a$ tends to the ADM mass of $(N, g)$. That is
$$\lim_{a\to\infty}m_{BY}(S_a)
=m.$$
\end{theo}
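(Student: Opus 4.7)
My plan is to exploit the $SO(2)$ rotational symmetry shared by the ambient metric $g$ and the surface $S_a$ to reduce the computation of both $H$ and $H_0$ to an explicit one-variable calculation in $\varphi$, and then pass to the limit $a\to\infty$ by asymptotic expansion in powers of $1/r$. For $H$, I will use the conformal transformation formula: since $g=\phi^4\delta$,
$$H=\phi^{-2}\bar H+4\phi^{-3}\p_{\bar\nu}\phi,\qquad d\sigma=\phi^4\,d\bar\sigma,$$
where $\bar H$ is the Euclidean mean curvature of $S_a$ and $\bar\nu$ its Euclidean outward unit normal. Both $\bar H$ and $\p_{\bar\nu}\phi=-(2m/r^3)\,\bar\nu\cdot x$ are directly computable from the parametrization, with $r^2=a^2(w^2+f(a)^2z^2)$ depending only on $\varphi$.

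For $H_0$ the key observation is that, because $\phi$ is radial and $r|_{S_a}$ depends only on $\varphi$, the induced metric
$$g|_{S_a}=\phi^4\bigl(a^2(w'^2+f(a)^2z'^2)\,d\varphi^2+a^2w^2\,d\theta^2\bigr)$$
is again a metric of revolution. Writing it in normal form $d\tilde\varphi^2+\tilde W(\tilde\varphi)^2\,d\theta^2$ with
$$d\tilde\varphi=\phi^2 a\sqrt{w'^2+f(a)^2z'^2}\,d\varphi,\qquad\tilde W=\phi^2 aw,$$
and invoking uniqueness of the isometric embedding (already used in the well-definedness of $m_{BY}$), the embedded surface must itself be a surface of revolution $(\tilde W\cos\theta,\tilde W\sin\theta,\tilde Z(\tilde\varphi))$ satisfying $(\tilde Z')^2=1-(\tilde W')^2$. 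The positive Gaussian curvature hypothesis ensures $|\tilde W'|<1$ in the interior, so the embedding is well defined and convex, and its mean curvature admits the explicit expression
$$H_0=\frac{\tilde Z'}{\tilde W}-\frac{\tilde W''}{\tilde Z'}.$$
Converting $\tilde\varphi$-derivatives back to $\varphi$-derivatives via the chain rule turns $H_0-H$ into an explicit function of $\varphi$, $a$, and $f(a)$.

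With the explicit integrand $(H_0-H)\,d\sigma$ in hand, the final step is to expand in powers of $1/r$ and integrate. The coordinate-sphere case ($f\equiv 1$, $w^2+z^2\equiv 1$) is the prototype: the leading Euclidean mean curvature terms cancel between $H_0$ and $H$, and the surviving contribution is of order $m/r^2$, integrating over $S_a$ to exactly $8\pi m$. The same cancellation should drive the general case after lengthier but still explicit algebra. The main obstacle, and precisely the point at which this theorem extends \cite{FK}, is uniform control when $f(a)\to\infty$: in that regime the Euclidean distance $r$ varies from order $a$ near the ``equator'' ($z\approx 0$) to order $af(a)$ near the ``poles'' ($w\approx 0$), so $\phi-1=2m/r$ is not of a single order across $S_a$ and a naive term-by-term Taylor expansion produces error integrals that need not vanish. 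The crux of the argument will be tracking $\tilde W'$ and $\tilde W''$ (and hence $\tilde Z'=\sqrt{1-(\tilde W')^2}$) in the polar caps where $w$ and $\tilde W'$ degenerate, and verifying that these cap contributions to the Brown--York integral are $o(1)$, so that only the ``equatorial'' contribution survives and yields the ADM mass $m$.
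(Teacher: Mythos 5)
Your skeleton coincides with the paper's: the conformal formula $H=\phi^{-2}\bar H+4\phi^{-3}n(\phi)$ is Lemma \ref{lem: H} there, and the observation that the isometric embedding is again a surface of revolution, with $\tilde W=a\phi^2w$ and $\tilde Z$ recovered from the arclength condition, is exactly Lemma \ref{lem: uv} (where $u=\phi^2w$ and $v'=\phi^2h'(1-\cdots)^{1/2}$). Two corrections to the outline, one minor and one fatal. The minor one: the cancellation is not an ``equator versus polar caps'' matter. After expanding, the non-error, non-$n(\phi)$ part of $H_0-H$ times the measure is the exact derivative $2\pi a\frac{d}{d\varphi}\bigl(\frac{2\phi'w^2}{h'}\bigr)$, which integrates to zero over $[0,l]$ outright; the mass then comes entirely from the term $-4\phi^{-3}n(\phi)$, which is identified with $m_{ADM}$ by the divergence theorem and the harmonicity of $\phi$, not by any surviving ``equatorial contribution.'' Your proposal gives no mechanism at all for extracting $m$ from the surviving term.

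The fatal point is that you correctly name the difficulty --- when $f(a)\to\infty$ the radius $r$ ranges from order $a$ near $z\approx0$ to order $af$ near $w\approx0$, so a naive expansion in $1/r$ leaves error integrals that are not obviously $o(1)$ --- and then defer it: ``verifying that these cap contributions are $o(1)$'' is announced as the crux but never carried out, and it is precisely the content of the theorem beyond \cite{FK}. The paper's resolution is concrete: the error terms are $O\bigl(\frac{fa}{Dr^3}\bigr)$ and $O\bigl(\frac{f^2a^2w}{D^3r^5}\bigr)$ against $d\sigma_0=a^2Dw\,d\varphi\,d\theta$; writing $f^2=1+\alpha^2$ and using that $w/z'$ extends continuously to $[0,l]$ (Lemma \ref{lem: 1}, so $w\leq C|z'|$ in the caps), the dangerous integrals reduce under the substitution $y=\alpha z/C_1$ to $\int(1+y^2)^{-2}\,dy\leq\pi$, a bound \emph{uniform} in $\alpha$, yielding $O(a^{-1})$ overall. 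Without this or an equivalent uniform-in-$f$ estimate the argument does not close, so as written the proposal reproduces the easy part of the proof and leaves the new part as an assertion.
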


One example of surfaces satisfying the
conditions in Theorem \ref{thm1}  is the
family of ellipsoids:
   $$S_a=\lf\{ (x^1)^2+(x^2)^2+\frac{2m(x^3)^2}{a}=a^2\ri\}$$
   which has unbounded ratios of their radii as $a\to \infty$.

From Theorem \ref{thm1}, we have
 \begin{coro} \label{co1}
Suppose $\frac{f^2}{a}=o(1)$ for sufficiently large $a$, then
$$\lim_{a\to\infty}m_{BY}(S_a)
=m.$$
 \end{coro}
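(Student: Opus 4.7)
The plan is to reduce Corollary \ref{co1} to Theorem \ref{thm1} by verifying that the decay hypothesis $f(a)^{2}/a=o(1)$ forces $(S_a, g|_{S_a})$ to have strictly positive Gaussian curvature for all sufficiently large $a$.

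First, because $g=\phi^{4}\delta$, the metric induced on $S_a$ by $g$ equals $\phi^{4}|_{S_a}\gamma_\delta$, where $\gamma_\delta$ is the Euclidean induced metric. The two-dimensional conformal change formula for Gaussian curvature gives
\begin{equation*}
K_g \;=\; \phi^{-4}\bigl(K_\delta - 2\Delta_{\gamma_\delta}\log\phi\bigr),
\end{equation*}
and, since $\phi>0$, positivity of $K_g$ is equivalent to $K_\delta>2\Delta_{\gamma_\delta}\log\phi$. I would prove this pointwise inequality by matching pointwise bounds on the two quantities.

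For the lower bound on $K_\delta$, the standard surface-of-revolution formula with profile $\rho=aw$, $\zeta=af(a)z$ yields
\begin{equation*}
K_\delta \;=\; \frac{f^{2}\,z'(w'z''-w''z')}{a^{2}\,w\,Q^{4}},\qquad Q^{2}\;:=\;w'^{2}+f^{2}z'^{2}.
\end{equation*}
The factor $z'(w'z''-w''z')/w$ is a positive multiple of the Gaussian curvature of the original ($f=1$) profile surface and admits a uniform positive lower bound on $[0,l]$ by convexity, combined with a limit analysis at the axis points $\{w=0\}$ provided by smoothness; this gives $K_\delta \geq c_0 f^{2}/(a^{2}Q^{4})$ with $c_0>0$ independent of $a$ and of $f\geq 1$. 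For the upper bound on $|\Delta_{\gamma_\delta}\log\phi|$, I would expand $\log\phi=2m/r+O(r^{-2})$ with $r=a\sqrt{w^{2}+f^{2}z^{2}}\ge C_1 a$ and write $\Delta_{\gamma_\delta}$ in the $(\varphi,\theta)$-chart using the diagonal components $a^{2}Q^{2}$ and $a^{2}w^{2}$. Each tangential derivative of $\log\phi$ picks up a $1/a$ factor and the inverse metric gives another $1/(a^{2}Q^{2})$, so a careful bookkeeping of the $f$-powers in the three terms that arise from $\Delta_{\gamma_\delta}$ acting on a function of $\varphi$ alone produces a pointwise estimate of the form $|\Delta_{\gamma_\delta}\log\phi|\leq C(1+f^{2})/(a^{3}Q^{2})$, with $C$ depending only on $m$ and the original profile.

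Combining these pointwise bounds,
\begin{equation*}
\frac{K_\delta}{|\Delta_{\gamma_\delta}\log\phi|} \;\gtrsim\; \frac{a\,f^{2}}{(1+f^{2})\,Q^{2}} \;\gtrsim\; \frac{a}{Q^{2}} \;\gtrsim\; \frac{a}{f^{2}},
\end{equation*}
where in the last step I use $Q^{2}=1+(f^{2}-1)z'^{2}\leq f^{2}$ for $f\geq 1$. Under the hypothesis $f^{2}/a=o(1)$ this ratio tends uniformly to infinity on $S_a$, so $K_g>0$ for all sufficiently large $a$, and Theorem \ref{thm1} delivers $\lim_{a\to\infty}m_{BY}(S_a)=m$. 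The step I expect to be the main obstacle is the pointwise upper bound on $|\Delta_{\gamma_\delta}\log\phi|$ with exactly the right $f$-dependence: the three contributions to the Laplacian have a priori different scalings in $f$, and one must show that none of them spoils the bound $C(1+f^{2})/(a^{3}Q^{2})$ — in particular at the axis points $\{w=0\}$ where the formula is formally singular and a cancellation between the first and third terms must be tracked. After this, the positivity comparison is a short algebraic exercise that matches precisely the stated condition $f^{2}/a=o(1)$.
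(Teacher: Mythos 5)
Your proposal is correct in outline and reaches the same reduction as the paper (verify $K_g>0$ for large $a$ via the conformal change formula, then invoke Theorem \ref{thm1}), and your lower bound for $K_\delta$ is exactly the paper's Lemma \ref{lem: bar K}: using $w'w''+z'z''=0$ your expression $z'(w'z''-w''z')/w$ collapses to $-w''/w=K_0$, giving $K_\delta=K_0f^2/(a^2D^4)$. The genuine difference is in how the Laplacian term is estimated. The paper works extrinsically: it writes $\Delta_S\psi=\Delta_{\mathbb{R}^3}\psi-\nabla^2_{\mathbb{R}^3}\psi(n,n)-\overline{H}\,n(\psi)$, uses harmonicity of $\phi$ to make the ambient Laplacian contribution have a favorable sign, and only bounds the \emph{negative part} by $O\bigl(f/(ar^2D)\bigr)$; this sidesteps the coordinate degeneracy at the axis entirely, since the only surface data entering are $\overline H=O(f/(aD))$ and $n$. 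You instead compute $\Delta_{\gamma_\delta}$ intrinsically in the $(\varphi,\theta)$ chart and bound the full quantity by $O\bigl(f^2/(a^3Q^2)\bigr)$. That bound is attainable with the estimates already in Lemma \ref{lem: est}, but the step you flag as the obstacle is real and needs two specific inputs you do not supply: (i) at the poles the term $\psi'w'/(a^2Q^2w)$ is controlled not by a cancellation between terms but because $\phi'\propto ww'+f^2zz'$ vanishes there at the right rate, so $\psi'/w=O(a^2f^2/r^3)$ via the boundedness of $z'/w$ (Lemma \ref{lem: 1}); and (ii) for the term involving $Q'$ you need the bound $|Q'|=O(f)$ (which follows from $f|z'|\le Q$ and $Q\ge1$) rather than the paper's stated $O(f^2/Q)$, or else an extra factor of $f/Q^2$ appears that is not uniformly bounded. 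With those two points filled in, both routes give $K_\delta/|\text{negative part}|\gtrsim a/f^2\to\infty$ under $f^2/a=o(1)$, so your argument closes. (Minor remark: your conformal coefficient $2$ in $K_g=\phi^{-4}(K_\delta-2\Delta\log\phi)$ versus the paper's $4$ is immaterial to the order comparison.)
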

Clearly the above example shows that the condition  $\frac{f^2}{a}=o(1)$ is not necessary.

This paper is organized as follows. In Section \ref{sec: pf}, we will prove
Theorem \ref{thm1}. Corollary \ref{co1} will be proven in Section 3.

\section*{Acknowledgments}
The authors would like to thank Prof. Luen-Fai Tam for his  constant encouragement and very useful advice. The author
 (XQ) would also like to thank Prof. Youde Wang for his support.

\section{Proof of Theorem \ref{thm1}}\label{sec: pf}

Let us first prove some lemmas.
We can  assume
$w $ is anti-symmetric about $0$ and $l$, $z$
is symmetric about $0$ and
$l $. This implies
\begin{equation}\label{eq: w0}
w (0)=w (l )=z '(0)=z '(l )=0.
\end{equation}

The Gaussian curvature $\overline K $ of \eqref{eq: rev} with respect to $\delta$ is (
\cite{doCarmo} p.162)
$$ \overline K= \frac{z'(w'z''-w''z')}{w} \text{ for } \varphi\in(0,l).$$ So by \eqref{eq: ass}, $z'<0$ on $(0,l)$.

We will sometimes regard $\phi$ as function of $\varphi$ (by restricting on $S_a$) by abuse of notation. We define
 \begin{equation}
 D=\sqrt{w'^2+h'^2}=\sqrt{w'^2+f^2 z'^2}.
 \end{equation}

Similar to Lemma 2.1 in \cite{FK}, we have

 \begin{lemm}\label{lem: 1}
 The functions $\frac{w}{z'}$ and $\frac{w''}{z'}$can be extended continuously to $[0,l]$.
 \end{lemm}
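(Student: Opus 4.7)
The plan is to treat the two quotients, which are smooth on $(0,l)$ by construction, and to show each admits a finite limit as $\varphi \to 0^+$ and as $\varphi \to l^-$. By the anti-symmetry of $w$ and symmetry of $z$ about each of $0$ and $l$, the two endpoints are completely analogous, so I focus on $\varphi=0$.

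At $\varphi=0$ we have $w(0)=0$ and $z'(0)=0$ by \eqref{eq: w0}, so $w/z'$ is of the indeterminate form $0/0$. Differentiating the unit-speed relation $w'^2+z'^2=1$ gives $w'w''+z'z''=0$, and evaluating at $0$ together with $w'(0)^2=1$ forces $w''(0)=0$; hence $w''/z'$ is also of the form $0/0$ at $0$. The key step is then to show $z''(0)\ne 0$, which I would extract from the positivity of the Euclidean Gaussian curvature $\overline K = z'(w'z''-w''z')/w$: substituting the Taylor expansions $w(\varphi)=w'(0)\varphi+O(\varphi^3)$ and $z'(\varphi)=z''(0)\varphi+O(\varphi^3)$ (both odd about $0$, by the parities of $w$ and $z$) together with $w''(0)=0$, one computes $\lim_{\varphi\to 0^+}\overline K(\varphi)=z''(0)^2$, and convexity forces $z''(0)\ne 0$.

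With $z''(0)\ne 0$ in hand, a single application of L'Hopital's rule (legitimized in the second case by $w''(0)=0$) yields $\lim_{\varphi\to 0^+} w/z' = w'(0)/z''(0)$ and $\lim_{\varphi\to 0^+} w''/z' = w'''(0)/z''(0)$, both finite. The identical argument at $\varphi=l$ (where $w(l)=0$, $z'(l)=0$ and $w'(l)^2=1$) completes the continuous extension to $[0,l]$. The main obstacle is pinning down $z''(0)\ne 0$: smoothness of the profile and the unit-speed parametrization alone give only $w''(0)=0$, so the convexity hypothesis is genuinely needed to rule out higher-order vanishing of $z'$ at the poles relative to $w$.
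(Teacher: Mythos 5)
Your argument is correct, and its essential engine is the same as the paper's: at the pole one has $w(0)=z'(0)=0$, and the positivity of the Euclidean Gaussian curvature, whose value at the pole is $z''(0)^2$, is exactly what rules out $z''(0)=0$ and legitimizes the limit $w/z'\to w'(0)/z''(0)$. Where you diverge is in the second quotient. You first extract $w''(0)=0$ (from the parity of $w$, or equivalently from differentiating $w'^2+z'^2=1$ at a point where $w'(0)^2=1$) and then apply L'Hopital to get $w''/z'\to w'''(0)/z''(0)$. The paper instead observes that for a unit-speed plane curve the acceleration is proportional to the unit normal, which yields the exact pointwise identity $w''=k\,z'$ with $k=w''z'-z''w'$ the curvature of the profile curve; the quotient $w''/z'$ is then literally equal to the continuous function $k$ on $(0,l)$ and extends to $k(0)$ with no limit computation at all. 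The two answers agree, since differentiating $w''=kz'$ gives $w'''(0)=k(0)z''(0)$. Your route costs one extra derivative of $w$ and an invocation of L'Hopital but needs no geometric identity; the paper's route is cleaner and identifies the limiting value as the curvature of the generating curve, which is conceptually more transparent. Both are complete proofs of the stated lemma.
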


 \begin{proof}
 The Gaussian curvature of the surface given by \eqref{eq: rev} at $(0,0, z(0))$ is $z''(0)^2>0$, so $\lim_{\varphi\to 0}\frac{w}{z'}= \frac{w'(0)}{z''(0)}<\infty$. The case for $\varphi=l$ is the same.

 The plane curve $(w(\varphi), z(\varphi))$ has curvature $k=w''z'- z'' w'= (w',z')\cdot (-z'',w'')$.
 Since $(w'',z'')\perp(w',z')$ and $(w'',z'')\perp(-z'',w'')$, we have $w''=kz'$. From this we see that $\frac{w''}{z'}$ can be extended
 to $k(0)$ at $\varphi=0$. The case for $\varphi=l$ is the same.
 \end{proof}

\begin{lemm}\label{lem: est} The following functions can be extended continuously to $[0,l]$ such that
\begin{equation}\label{eq: est}
\begin{split}
\frac{w}{h'}=O\lf(\frac{1}{f}\ri),\frac{w''}{h'}=O\lf(\frac{1}{f}\ri), \\
D'=O\lf(\frac{f^2}{D}\ri), \frac{D'}{h'}=O\lf(\frac{f}{D}\ri)
\end{split}
\end{equation}
\begin{equation}\label{eq: est2}
\begin{split}
\phi'=O\lf(\frac{f a}{r^2}\ri), \frac{\phi'}{h'}=O\lf(\frac{f a^2}{r^3}\ri),
\phi''=O\lf(\frac{f^2 a^2}{r^3}\ri).
\end{split}
\end{equation}
\end{lemm}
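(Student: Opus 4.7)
The plan is to prove all six estimates by direct computation, using only the unit-speed identity $w'^2+z'^2=1$ (so $w'w''+z'z''=0$) together with Lemma \ref{lem: 1}, which says $w/z'$ and $w''/z'$ are bounded on $[0,l]$. Throughout set $\rho:=\sqrt{w^2+f^2 z^2}$, so that $r=a\rho$ on $S_a$; I will exploit the three pointwise lower bounds $\rho\geq w$, $\rho\geq f|z|$, and $\rho\geq C_1$ (the last from \eqref{eq: ass} together with $f\geq 1$) in different places.

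The four estimates of \eqref{eq: est} are easy. Since $h'=fz'$, the identities $w/h'=f^{-1}(w/z')$ and $w''/h'=f^{-1}(w''/z')$ are $O(1/f)$ by Lemma \ref{lem: 1}. For $D$, differentiating $D^2=w'^2+f^2 z'^2$ and using $w'w''=-z'z''$ gives $D'=(f^2-1)z'z''/D$, whence $D'=O(f^2/D)$ by boundedness of $z'z''$ and $f\geq 1$, and $D'/h'=(f^2-1)z''/(fD)=O(f/D)$.

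The main obstacle is \eqref{eq: est2}; the subtlety is that each summand appearing in $\rho'$ and $\rho''$ must be controlled by the correct lower bound on $\rho$ so that the final powers of $f$ and $r$ match the claim. From $\phi'=-2ma\rho'/r^2$ with $\rho'=(ww'+f^2 zz')/\rho$, the bound
\[
|\rho'|\leq \frac{|ww'|}{\rho}+\frac{f^2|zz'|}{\rho}\leq 1+f,
\]
using $\rho\geq w$ on the first summand and $\rho\geq f|z|$ on the second, immediately yields $\phi'=O(fa/r^2)$. For $\phi'/h'$, rewrite
\[
\frac{\rho'}{fz'}=\frac{w'(w/z')}{f\rho}+\frac{fz}{\rho};
\]
Lemma \ref{lem: 1} together with $\rho\geq C_1$ makes the first term $O(1)$, and $\rho\geq f|z|$ makes the second $O(1)$, so $\phi'/h'=O(a/r^2)$, which upgrades to the claimed $O(fa^2/r^3)$ because $r/a=\rho\leq C_2\sqrt{1+f^2}\leq Cf$. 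Finally, differentiating $\rho'$ produces
\[
\rho''=\frac{w'^2+ww''+f^2 z'^2+f^2 zz''}{\rho}-\frac{(\rho')^2}{\rho},
\]
in which each of the five numerator contributions is trivially $O(f^2)$, so $\rho''=O(f^2/\rho)=O(f^2 a/r)$; combined with $(r')^2/r^3=a^2(\rho')^2/r^3=O(f^2 a^2/r^3)$, the formula $\phi''=-2mr''/r^2+4m(r')^2/r^3$ gives $\phi''=O(f^2 a^2/r^3)$.
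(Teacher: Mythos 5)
Your proof is correct and follows essentially the same route as the paper's: direct computation of each derivative, with Lemma \ref{lem: 1} handling the terms divided by $h'=fz'$ and the bounds $C_1\le \rho\le C_2 f$, $D=O(f)$ supplying the powers of $f$ and $r$. The only (harmless) differences are that you bound $\rho'$ termwise via $\rho\ge w$ and $\rho\ge f|z|$ where the paper uses Cauchy--Schwarz, and you organize the $\phi''$ estimate through $\rho''$ rather than expanding $\phi''$ in full.
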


\begin{proof}
The first line of \eqref{eq: est} follows from Lemma \ref{lem: 1}. As $D'= \frac{w'w''+f^2z'z''}{D}$, $|D'|\leq \frac{Cf^2}{D}$. Also, using the fact that $\frac{w}{h'}=O(f^{-1})=O(1)$,
$|\frac{D'}{h'}|= |\frac{1}{D}(w'\frac{w''}{h'}+fz'')|= O(\frac{f}{D}) $.

\noindent
For \eqref{eq: est2}, we have $\phi'=-\frac{ma^2}{2r^3}(ww'+f^2zz')$. So
\begin{equation*}
\begin{split}
|\phi'|\leq \frac{ma^2}{2r^3} (w^2 + f^2 z^2)^{\frac{1}{2}} ( w'^2 + f^2 z'^2)^{\frac{1}{2}}
&=\frac{mar}{2r^3}( w'^2 + f^2 z'^2)^{\frac{1}{2}}\\
&= O\lf(\frac{fa}{r^2}\ri).
\end{split}
\end{equation*}
Using \eqref{eq: est} again,
$$\left|\frac{\phi'}{h'}\right|=-\frac{ma^2}{2r^3}\lf(w'\frac{w}{h'}+fz\ri)= O\lf(\frac{fa^2}{r^3}\ri).
$$
Finally,
\begin{equation*}
\begin{split}
|\phi''|
=&\left| \frac{3m}{2} r^{-5} a^4 (ww'+hh')^2- \frac{m}{2} r^{-3} a^2( (w')^2+(h')^2+ww''+hh'')\right|\\
\leq&  \frac{3m}{2} r^{-5} a^4 (w^2+h^2)((w')^2+(h')^2)\\
& +\frac{m}{2} r^{-3} a^2( ((w')^2+(h')^2)+(w^2+h^2)^{\frac{1}{2}}((w'')^2+(h'')^2)^{\frac{1}{2}})\\
=& \frac{3m}{2} r^{-3} a^2 ((w')^2+(h')^2)+ O(r^{-3}a^2f^2)=O(r^{-3}a^2f^2).
\end{split}
\end{equation*}
Hence Lemma \ref{lem: est} holds.
\end{proof}

We want to compute the mean curvatures. By Lemma 2.4 in \cite{FK}, we have
\begin{lemm}\label{lem: bar H}
For a smooth revolution surface in $(\mathbb{R}^3,\delta)$
parametrized by
\begin{equation}
(au(\varphi)\cos\theta, au(\varphi)\sin\theta, av(\varphi)), \quad
0<\varphi<l, 0<\theta< 2\pi,
\end{equation}
its mean curvature $\overline{H}$ with respect to $\delta$ is
\begin{equation}
\overline H=\frac{u''}{aTv'}-\frac{T'u'}{aT^2v'}-\frac{v'}{aTu}\quad\text{where
$T=\sqrt{u'^2+v'^2}$.}
\end{equation}
\end{lemm}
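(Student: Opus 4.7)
The plan is to compute the mean curvature of the revolution surface directly from the first and second fundamental forms. Writing $\tilde u = au$, $\tilde v = av$ and parametrizing by $(\varphi,\theta)$, I would first compute the two coordinate tangent vectors
\[
X_\varphi=(\tilde u'\cos\theta,\tilde u'\sin\theta,\tilde v'), \qquad
X_\theta=(-\tilde u\sin\theta,\tilde u\cos\theta,0),
\]
which give first fundamental form coefficients $E=\tilde u'^2+\tilde v'^2 = a^2 T^2$, $F=0$, $G=\tilde u^2 = a^2u^2$. The orthogonality $F=0$ already simplifies the final mean curvature formula to $H=\tfrac{L}{E}+\tfrac{N}{G}$ (using the principal-direction decomposition that holds for surfaces of revolution, so no $\tfrac12$ factor appears — this matches the three-term expression in the statement).

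Next I would compute a unit normal. Since $X_\varphi \times X_\theta = \tilde u(-\tilde v'\cos\theta,-\tilde v'\sin\theta,\tilde u')$ and $|X_\varphi\times X_\theta|=\tilde u \cdot aT$, one gets $\nu = \tfrac{1}{T}(-v'\cos\theta,-v'\sin\theta,u')$ (up to the orientation choice that makes the outward normal point outward, which I would fix by the convention that gives the sign of the last term $-v'/(aTu)$ in the statement).

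Then I would compute the three second derivatives $X_{\varphi\varphi}, X_{\varphi\theta}, X_{\theta\theta}$ and take their inner products with $\nu$ to get the second fundamental form coefficients. The off-diagonal term $M=\langle X_{\varphi\theta},\nu\rangle$ vanishes by the symmetry of revolution, and the diagonal terms yield $L = \tfrac{a}{T}(u'v''-u''v')$ and $N = \tfrac{auv'}{T}$ after a direct expansion. Dividing by $E$ and $G$ respectively gives
\[
H = \frac{L}{E} + \frac{N}{G} = \frac{u'v''-u''v'}{aT^3} + \frac{v'}{aTu}\quad(\text{up to sign}).
\]

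The final step, and the only mildly delicate point, is to rewrite $\tfrac{u'v''-u''v'}{aT^3}$ in the asymmetric form $\tfrac{u''}{aTv'}-\tfrac{T'u'}{aT^2 v'}$ as in the statement. This is a purely algebraic manipulation using $TT' = u'u''+v'v''$: multiplying out, one has $u''T-T'u' = \tfrac{v'(u''v'-u'v'')}{T}$, so $\tfrac{u''}{aTv'}-\tfrac{T'u'}{aT^2v'} = \tfrac{u''v'-u'v''}{aT^3}$. Combining with the sign convention fixes the stated formula. The only obstacle is bookkeeping the signs so that the chosen unit normal corresponds to the outward one used throughout the rest of the paper; all other work is routine computation.
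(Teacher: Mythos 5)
Your computation is correct: the coefficients $E=a^2T^2$, $G=a^2u^2$, $F=M=0$, $L=\frac{a}{T}(u'v''-u''v')$, $N=\frac{auv'}{T}$ all check out, the algebraic identity $\frac{u''}{aTv'}-\frac{T'u'}{aT^2v'}=\frac{u''v'-u'v''}{aT^3}$ is right, and you correctly flag that the remaining issue is only the orientation convention (the paper uses the convention in which the round sphere of radius $a$ has $\overline H=+2/a$ with respect to the outward normal, which fixes the overall sign and matches the stated formula). The paper itself does not prove this lemma but simply cites Lemma 2.4 of \cite{FK}; your direct first-and-second-fundamental-form calculation is the standard derivation and fills that in completely.
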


Similar to Lemma 2.5 in \cite{FK}, we can get
\begin{lemm}\label{lem: H}
The mean curvature $H$ of $S_a$ with respect to $g$ is
$$H=
\frac{w''}{a\phi^2Dh'}
-\frac{D'w'}{a\phi^2D^2h'}
-\frac{h'}{a\phi^2Dw}+4\phi^{-3}n(\phi)
 $$where $n$ is the outward unit normal vector of $S_a$ with respect to
$\delta$.
\end{lemm}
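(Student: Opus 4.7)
The strategy is to combine Lemma \ref{lem: bar H}, which gives the mean curvature $\overline{H}$ of $S_a$ with respect to the flat metric $\delta$, with the standard conformal transformation law for mean curvatures. Since $g = \phi^4 \delta$ is conformally flat with conformal factor $e^{2f} = \phi^4$, i.e.\ $f = 2\log \phi$, the mean curvature of a two-dimensional hypersurface in a three-dimensional manifold transforms under a conformal change $g_0 \mapsto e^{2f} g_0$ according to
\begin{equation*}
H_g = e^{-f}\bigl(\overline{H} + 2\, \nu(f)\bigr),
\end{equation*}
where $\nu$ is the outward unit normal with respect to $g_0 = \delta$. Substituting $f = 2 \log \phi$ yields $H = \phi^{-2}\overline{H} + 4\phi^{-3} n(\phi)$.

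Next I would apply Lemma \ref{lem: bar H} with $u = w$ and $v = h_a = f(a)z$. Then $T = \sqrt{u'^2 + v'^2} = \sqrt{w'^2 + h'^2} = D$, and the formula gives
\begin{equation*}
\overline{H} = \frac{w''}{aDh'} - \frac{D'w'}{aD^2 h'} - \frac{h'}{aDw}.
\end{equation*}
Multiplying through by $\phi^{-2}$ and adding $4\phi^{-3} n(\phi)$ gives exactly the expression stated in Lemma \ref{lem: H}.

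The only place requiring care is the conformal change formula itself; I would either quote it from a standard reference or derive it in one line by computing $\mathrm{II}_g(X,Y) = e^{f}\bigl(\mathrm{II}_{g_0}(X,Y) - g_0(X,Y)\,\nu(f)\bigr)$ for tangent vectors $X,Y$ and tracing with the induced metric $e^{-2f} g_0|_{TS_a}$, which yields the quoted formula. Everything else is direct substitution, and the two-dimensionality of the surface together with the three-dimensionality of the ambient manifold produce the coefficient $4$ in $4\phi^{-3} n(\phi)$.
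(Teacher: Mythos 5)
Your argument is correct and essentially identical to the paper's, which likewise substitutes $u=w$, $v=h_a$, $T=D$ into Lemma \ref{lem: bar H} and applies the conformal transformation law $H=\phi^{-2}\bigl(\overline H+4\phi^{-1}n(\phi)\bigr)$, quoted there from Schoen--Yau. One small caveat: under the convention that convex surfaces have positive mean curvature with respect to the outward normal, the second fundamental form transforms as $\mathrm{II}_g(X,Y)=e^{f}\bigl(\mathrm{II}_{g_0}(X,Y)+g_0(X,Y)\,\nu(f)\bigr)$, so the minus sign in your parenthetical one-line derivation would trace to $e^{-f}\bigl(\overline H-2\nu(f)\bigr)$ rather than the (correct) formula $e^{-f}\bigl(\overline H+2\nu(f)\bigr)$ that you actually use.
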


\begin{proof}
It is similar to the proof of Lemma 2.5 in \cite{FK}. For completeness, we sketch it here. By Lemma \ref{lem: H}, the mean curvature of $S_a$ with respect
to $\delta$ is
\begin{equation}\label{eq:lmsc1}
\overline H=\frac{w''}{aDh'}-\frac{D'w'}{aD^2h'}-\frac{h'}{aDw}.
\end{equation}
The mean curvature $ {H}$ of $S_a$ with respect to $ {g}$ is
(\cite{SYL79}, page 72):
\begin{equation}\label{eq:lmsc2}
 {H}= {\phi}^{-2}\lf(\bar{H}+4 {\phi}^{-1}n\lf( {\phi}\ri)\ri)
\end{equation}
where $n$ is the outward unit normal vector of $S_a$ with respect to $\delta$. Submitting \eqref{eq:lmsc1} to \eqref{eq:lmsc2}, we can get Lemma  \ref{lem: H}.
\end{proof}

\begin{lemm}\label{lem: uv}
Suppose $(S_a, g|_{S_a})$ has positive Gaussian curvature such that it can be uniquely isometrically embedded into $\mathbb{R}^3$ (for sufficiently large $a$), then the embedding is given by
\begin{equation}\label{eq: embed}
(au(\varphi)\cos\theta,au(\varphi)\sin\theta,av(\varphi)), \varphi\in [0, l], \theta\in [0, 2\pi]
\end{equation}
where
\begin{equation}\label{eq:va1}
u=\phi^2 w\text{\quad and \quad}
v'=\phi^2 h'
\lf(1-\frac{4\phi'ww'}{\phi h'^2}-\frac{4\phi'^2w^2}{\phi^2
h'^2}\ri)^{\frac{1}{2}} .
\end{equation}
\end{lemm}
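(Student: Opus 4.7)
The plan is to exploit the rotational symmetry of $(S_a, g|_{S_a})$ together with the uniqueness of the isometric embedding. First I would observe that rotation in $\theta$ is an isometry of $(S_a, g|_{S_a})$, since both the defining parametrization and the conformal factor $\phi$ depend only on $\varphi$. By the uniqueness result of Herglotz--Sacksteder--Pogorelov cited in the introduction, the isometric image in $\mathbb{R}^3$ is determined up to rigid motion, so any $\theta$-rotation of the surface must correspond to a rigid motion of $\mathbb{R}^3$. Standard arguments then force the image to be a surface of revolution about some axis, which after a rigid motion we may take to be the $x^3$-axis. Hence the embedding has the form \eqref{eq: embed} for some profile functions $u(\varphi), v(\varphi)$, with $u\geq 0$.

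Next I would identify $u$ and $v'$ by matching the two induced metrics. The metric on $S_a$ pulled back via the original parametrization is
\begin{equation*}
g|_{S_a} = \phi^4\bigl(a^2 D^2\, d\varphi^2 + a^2 w^2\, d\theta^2\bigr),
\end{equation*}
using $w'^2+h'^2=D^2$. The Euclidean metric pulled back via \eqref{eq: embed} is $a^2(u'^2+v'^2)\,d\varphi^2 + a^2 u^2\, d\theta^2$. Comparing the $d\theta^2$ coefficients gives $u = \phi^2 w$ (the nonnegative root, using $u,w\geq 0$). Comparing the $d\varphi^2$ coefficients gives $v'^2 = \phi^4 D^2 - u'^2$.

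Finally I would expand $u' = 2\phi\phi' w + \phi^2 w'$, so that
\begin{equation*}
u'^2 = \phi^4 w'^2 + 4\phi^3\phi' w w' + 4\phi^2 \phi'^2 w^2,
\end{equation*}
and subtract from $\phi^4 D^2 = \phi^4 w'^2 + \phi^4 h'^2$ to obtain
\begin{equation*}
v'^2 = \phi^4 h'^2\left(1 - \frac{4\phi' w w'}{\phi h'^2} - \frac{4\phi'^2 w^2}{\phi^2 h'^2}\right).
\end{equation*}
Taking square roots yields \eqref{eq:va1}. To fix the sign of $v'$, I would note that $h' = f z' < 0$ on $(0,l)$ by the discussion after \eqref{eq: w0}, and that for sufficiently large $a$ the parenthesized factor is positive (this uses the estimates \eqref{eq: est}--\eqref{eq: est2} of Lemma \ref{lem: est}, which control the correction terms); choosing the orientation of the embedding so that $v(0)>v(l)$ matches the sign of $z$ then selects the negative root, making $v'$ well-defined and smooth. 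The only subtlety worth flagging is the regularity at the poles $\varphi=0,l$ where $w$ and $h'$ both vanish; there one invokes Lemma \ref{lem: 1} to see that the ratios in \eqref{eq:va1} extend continuously, so $v'$ extends smoothly across the poles.
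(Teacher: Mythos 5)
Your proposal is correct and follows essentially the same route as the paper: identify $u=\phi^2 w$ from the $d\theta^2$ coefficient, set $v'^2=\phi^4D^2-u'^2$, expand $u'$, and use Lemma \ref{lem: est} to see the bracketed factor is positive for large $a$, with the pole behaviour handled by the parity/continuity considerations. The only cosmetic difference is that you first deduce the surface-of-revolution form from the rotational symmetry plus uniqueness, whereas the paper simply constructs the revolution embedding directly and lets uniqueness identify it as \emph{the} embedding; both hinge on the same cited rigidity results.
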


\begin{proof}
The proof is similar to that of Lemma 2.6 in \cite{FK}. In $(\varphi, \theta)$ coordinates, the metric on $S_a$ induced by
$g$ can be written as:
\begin{equation}\label{eq: metric}
ds^2=a^2\phi^4(w'^2+h'^2)d\varphi^2+a^2\phi^4 w^2 d\theta^2.
\end{equation}
We can regard $(S_a,ds^2)$ as the sphere with the
metric $ds^2$. We want to find two functions $u,v$ such that the
surface written as  the form \eqref{eq: embed} is an embedded
surface  $S^e_a$ of $S_a$ into $(\mathbb{R}^3,\delta)$. First of
all, the induced metric by the Euclidean metric on the surface which
is of the form \eqref{eq: embed}  can be written as:
$$ds^2_e=a^2\lf(u'^2+v'^2\ri)d\varphi^2+a^2u^2d\theta^2 .$$
Comparing this with \eqref{eq: metric}, one can choose
\begin{equation}\label{eq: u=phi^2w}
u=\phi^2w \textrm{ and } u'^2+v'^2=\phi^4D^2.
\end{equation}
Consider
\begin{equation}\label{eq: phi^4-u'^2}
\begin{split}
\phi^4(w'^2+h'^2)-u'^2
&=\phi^2 (\phi^2(w'^2+h'^2)-(2\phi'w+\phi w')^2)\\
&=\phi^2 (\phi^2h'^2-4\phi\phi'ww'-4\phi'^2w^2)\\
&=\phi^4h'^2 \lf(1-\frac{4\phi'ww'}{\phi
h'^2}-\frac{4\phi'^2w^2}{\phi^2 h'^2}\ri) .
\end{split}
\end{equation}
By Lemma \ref{lem: est}, the functions
$\displaystyle\frac{\phi'ww'}{\phi h'^2}, \frac{\phi'^2w^2}{\phi^2
h'^2}$ can be extended continuously on $[0, l]$ with
$\displaystyle\frac{\phi'ww'}{\phi h'^2}=O(a^{-1}),
\frac{\phi'^2w^2}{\phi^2 h'^2}=O(a^{-2})$. So $\displaystyle
1-\frac{4\phi'ww'}{\phi h'^2}-\frac{4\phi'^2w^2}{\phi^2 h'^2}> 0$
for sufficiently large $a$. For these $a$, we can take
$$
v'=\phi^2 h' \lf(1-\frac{4\phi'ww'}{\phi
h'^2}-\frac{4\phi'^2w^2}{\phi^2 h'^2}\ri)^{\frac{1}{2}} ,
$$
so that $
u'^2+v'^2=\phi^4(w'^2+h'^2)
$.
Note that $v'$ is an odd function for
$\varphi\in[-l,l].$ By choosing an initial value, one can get an
even function $v$. By the above argument, one has
$$
v'=\phi^2h' \lf(1-\frac{2\phi'ww'}{ h'^2}+O\lf(a^{-2}\ri)\ri).
$$

From \eqref{eq: u=phi^2w} and \eqref{eq: phi^4-u'^2}, near
$\varphi=0$, $u,\ v$ can be extended naturally to
$(-\varepsilon,\varepsilon)$ for some $\varepsilon>0$. Since $u$ is
an odd function in $\varphi$ , $v$ is an even function in $\varphi$,
and $u'^2+v'^2=T^2>0$, the generating curve in $\{x^2=0\}$ is
symmetric with respect to $x^3$-axis, and is smooth at $\varphi=0$.
Similarly, it is also smooth at $\varphi=l$. Hence the revolution
surface determined by the choice of $u,\ v$ as above can be
extended smoothly to a closed revolution surface, which is the embedded surface of $S_a$.
\end{proof}

Now we are ready to prove Theorem \ref{thm1}.
\noindent
\begin{proof}[Proof of Theorem \ref{thm1}]
Let $F= \phi^2 D$, then by Lemma \ref{lem: bar H} and Lemma \ref{lem: H},
\begin{equation}\label{eq: H0-H}
\begin{split}
H_0-H&=\lf(\frac{u''}{aFv'}-\frac{w''}{a\phi^2Dh'}\ri)
-\lf(\frac{F'u'}{aF^2v'}-\frac{D'w'}{a\phi^2D^2h'}\ri)
\\
&\quad
-\lf(\frac{v'}{aFu}-\frac{h'}{a\phi^2Dw}\ri)-4\phi^{-3}n(\phi).
\end{split}
\end{equation}

\noindent
By Lemma \ref{lem: est}, we have
\begin{equation}\label{eq: I}
\begin{split}
\frac{u''}{aFv'}-\frac{w''}{a\phi^2Dh'}
 =\frac{4\phi'
w'}{aDh'}+\frac{2\phi''w}{aDh'} +\frac{2\phi'ww'w''}{aDh'^3}
+O\lf(\frac{f a^3}{D r^6 }\ri),
\end{split}
\end{equation}

\begin{equation}\label{eq: II}
\begin{split}
-\frac{F'u'}{aF^2v'}+\frac{D'w'}{a\phi^2D^2h'}
=&-\frac{2\phi'D'w}{aD^2
h'}-\frac{2\phi'w'}{aDh'}-\frac{2\phi'ww'^2D'}{aD^2h'^3}\\
&+O\lf(\frac{f^2 a^2 w}{D^3 r^5}\ri)+O\lf(\frac{f a^3}{D r^6}\ri),
\end{split}
\end{equation}

\noindent
and
\begin{equation}\label{eq: III}
\begin{split}
-\frac{v'}{aFu}+\frac{h'}{a\phi^2Dw}=\frac{2\phi'w'}{aDh'}+O\lf(\frac{f a^3}{D r^6}\ri).
\end{split}
\end{equation}
Summing \eqref{eq: I}, \eqref{eq: II} and \eqref{eq: III}, comparing
with \eqref{eq: H0-H}, we have
\begin{equation}\label{eq:msum}
\begin{split}
&H_0-H\\
&=\lf(\frac{4\phi' w'}{aDh'}+\frac{2\phi''w}{aDh'}-\frac{2\phi'wh''}{a D
h'^2}\ri)
-4\phi^{-3}n(\phi)+O\lf(\frac{f^2 a^2 w}{D^3 r^5}\ri)+O\lf(\frac{f a^3}{D r^6}\ri).
\end{split}
\end{equation}

Note that by Lemma \ref{lem: est},
\begin{equation*}
H_0-H=O\lf(\frac{f a}{D r^3}\ri)+O\lf(\frac{f^2 a^2 w}{D^3 r^5}\ri).
\end{equation*}

We claim that
\begin{equation}\label{eq: sigma}
\lim_{a\rightarrow \infty}\int_{S_a}(H_0-H)d\sigma=\lim_{a\rightarrow \infty}\int_{S_a}(H_0-H)d\sigma_0.
\end{equation}
Noting that $d\sigma-d\sigma_0=O(r^{-1})d\sigma_0$, it suffices to show that
\begin{equation}\label{eq: lim1}
\lim_{a\rightarrow \infty}\int_{S_a}O\lf(\frac{f a}{D r^3}\ri)O(r^{-1})d\sigma_0=O(a^{-1})
\end{equation}
and
\begin{equation}\label{eq: lim2}
\int_{S_a}O\lf(\frac{f^2 a^2 w}{D^3 r^5}\ri)O(r^{-1})d\sigma_0=O(a^{-1}),
\end{equation}
which in turn is implied by the stronger result
\begin{equation}\label{eq: lim3}
\int_{S_a}O\lf(\frac{f^2 a^2 w}{D^3 r^5}\ri)d\sigma_0=O(a^{-1}).
\end{equation}

To prove \eqref{eq: lim1}. Since $d\sigma_0=a^2 D w d\varphi d\theta$,  let $f^2=1+\alpha^2$, consider
\begin{equation*}
\begin{split}
0\leq \int_{S_a}\frac{f a}{D r^4}d\sigma_0
&=\frac{2\pi}{a} \int_0^l \frac{f w}{(w^2+f^2 z^2)^2}d\varphi \\
&\leq \frac{2\pi }{a} \int_0^l \frac{(1+\alpha)w }{(w^2+ z^2+\alpha^2 z^2)^2}d\varphi \\
&\leq \frac{2\pi }{a} \left(l+\int_0^l \frac{\alpha w}{\lf(C_1^2+\alpha^2 z^2\ri)^2}d\varphi \right)\\
&= \frac{2\pi }{a} \left(l+\frac{1}{C_1^4}\int_{s(0)}^{s(l)} \frac{\alpha w/z'}{\lf(1+\frac{\alpha^2 z^2}{C_1^2}\ri)^2}d s \right)\\
&\leq \frac{2\pi }{a} \left(l+\frac{C}{C_1^4}\int_{s(0)}^{s(l)} \frac{\alpha }{\lf(1+\frac{\alpha^2 z^2}{C_1^2}\ri)^2}d s \right)\\
&\leq \frac{2\pi }{a} \left(l+\frac{C}{C_1^3}\int_{y_1}^{y_2} \frac{1 }{(1+y^2)^2}d y \right)\\
&\leq \frac{2\pi }{a} \left(l+\frac{C\pi}{C_1^3}\right)
\end{split}
\end{equation*}
where we have used the fact that $\displaystyle \frac{w}{z'}$ can be extended to a continuous function on $[0, l]$ which is bounded by $C$. For \eqref{eq: lim3}, using similar computations, we have
\begin{equation*}
\begin{split}
0\leq \int_{S_a}\frac{f^2 a^2 w}{D^3 r^5}d\sigma_0
&\leq \frac{C_3}{a}+ \frac{C_3}{a}\int_0^l \frac{\alpha^2 w^2 }{(1+\alpha^2 z'^2)(1+ \frac{\alpha^2 z^2}{C_1^2})^{5/2}}d\varphi \\
&\leq \frac{C_3}{a}+ \frac{C_3}{a}\int_0^l \frac{\alpha^2 w^2 }{(2\alpha z')(1+ \frac{\alpha^2 z^2}{C_1^2})^{5/2}} d\varphi\\
&\leq \frac{C_3}{a}+ \frac{C_3 C}{2a}\int_0^l \frac{\alpha w }{(1+ \frac{\alpha^2 z^2}{C_1^2})^{5/2}} d\varphi\\
&\leq \frac{C_4}{a}.
\end{split}
\end{equation*}
Hence \eqref{eq: sigma} is true:
\begin{equation*}
\lim_{a\rightarrow \infty}\int_{S_a}(H_0-H)d\sigma=\lim_{a\rightarrow \infty}\int_{S_a}(H_0-H)d\sigma_0.
\end{equation*}
Next, by \eqref{eq:msum}
\begin{equation*}
\begin{split}
&H_0-H\\
&=\lf(\frac{4\phi' w'}{aDh'}+\frac{2\phi''w}{aDh'}-\frac{2\phi'wh''}{a D
h'^2}\ri)
-4\phi^{-3}n(\phi)+O\lf(\frac{f^2 a^2 w}{D^3 r^5}\ri)+O\lf(\frac{f a^3}{D r^6}\ri).
\end{split}
\end{equation*}
Consider
\begin{equation} \label{eq:macth}
\begin{split}
&\int_{S_a} \lf(\frac{4\phi' w'}{aDh'}+\frac{2\phi''w}{aDh'}
-\frac{2\phi'wh''}{a D h'^2}\ri) d\sigma_0\\
 &=2\pi a\int_{0}^l
\lf(\frac{4\phi' ww'}{h'}+\frac{2\phi''w^2}{h'}
-\frac{2\phi'w^2h''}{ h'^2}\ri) d\varphi\\
&=2\pi a\int_{0}^l  \frac{d}{d\varphi}\lf(\frac{2\phi' w^2}{h'}\ri)d\varphi\\
&=0.
\end{split}
\end{equation}
So by \eqref{eq: lim1} and \eqref{eq: lim2}, we have
\begin{equation}\label{eq:byl1}
\begin{split}
\frac{1}{8\pi}\int_{S_a}(H_0-H)d\sigma_0
&=-\frac{1}{2\pi}\int_{S_a} \phi^{-3}n(\phi)d\sigma_0+O\lf(a^{-1}\ri)\\
&=\frac{1}{4\pi}\int_{S_a} n(\phi^{-2})d\sigma_0+O\lf(a^{-1}\ri).
\end{split}
\end{equation}

For each $a$, choose $\partial B_a$ which is a Euclidean coordinate sphere enclosing $S_a$ and let $\Omega_a$ be the region between $\partial B_a$ and $S_a$.
The ADM mass of $N$ is defined as
\begin{equation*}
m_{ADM}=\lim_{a\to \infty}\frac{1}{16\pi} \int_{\partial B_a}
(g_{ij,i}-g_{ii,j})n^j d\sigma_0=-\lim_{a\to \infty}\frac{1}{2\pi} \int_{\partial B_a}
\phi^3 n(\phi) d\sigma_0
\end{equation*}
where $n$ is the unit outward normal of $\partial B_a$ with respect to
$\delta$. As $n(\phi)=O\lf(r^{-2}\ri)$,
\begin{equation*}
\phi^3 n(\phi) = n(\phi)+O(r^{-3}).
\end{equation*}
Clearly
\begin{equation*}
\lim_{a\to \infty} \int_{\partial B_a}
O(r^{-3}) d\sigma_0=0,
\end{equation*}
so
\begin{equation}\label{eq: adm}
m_{ADM}=-\lim_{a\to \infty}\frac{1}{2\pi} \int_{\partial B_a}
 n(\phi) d\sigma_0.
\end{equation}
By divergence theorem and the fact that $\Delta \phi=0$,
\begin{equation*}
\begin{split}
\frac{1}{4\pi}\int_{S_a} n(\phi^{-2})d\sigma_0
&=\frac{1}{4\pi}\int_{\partial B_a} n(\phi^{-2})d\sigma_0-\frac{1}{4\pi}\int_{\Omega_a}\Delta (\phi^{-2})dV_0\\
&=\frac{1}{4\pi}\int_{\partial B_a} n(\phi^{-2})d\sigma_0-\frac{1}{4\pi}\int_{\Omega_a}6|\nabla \phi|^2 \\
&=\frac{1}{4\pi}\int_{\partial B_a} n(\phi^{-2})d\sigma_0+\int_{\Omega_a}O(r^{-4})dV_0\\
&=-\frac{1}{2\pi} \int_{\partial B_a}
 n(\phi) d\sigma_0+\int_{\partial B_a}
O(r^{-3}) d\sigma_0+O(a^{-1})\\
&=-\frac{1}{2\pi} \int_{\partial B_a}
 n(\phi) d\sigma_0+O(a^{-1}).
 \end{split}
\end{equation*}
So by \eqref{eq:byl1} and \eqref{eq: adm}, we have
\begin{equation*}
\lim_{a\rightarrow \infty}\frac{1}{8\pi}\int_{S_a}(H_0-H)d\sigma_0=m_{ADM}.
\end{equation*}
Since
\begin{equation*}
\begin{split}
\lim_{a\to \infty} \frac{1}{8\pi} \int_{S_a}(H_0-H)d\sigma
=\lim_{a\to \infty} \frac{1}{8\pi} \int_{S_a}(H_0-H)d\sigma_0,
\end{split}
\end{equation*}
we can conclude that
\begin{equation*}
\lim_{a\to\infty}m_{BY}(S_a)=m_{ADM}(N,g).
\end{equation*}
This completes the proof of our result.
\end{proof}

\section{Proof of Corollary \ref{co1}}
First of all, we have the following:
\begin{lemm}\label{lem: bar K}
The Gaussian curvature $K(\delta)$ of $S_a$ with metric induced by $\delta $ is positive.
\end{lemm}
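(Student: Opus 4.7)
The plan is to apply the standard formula for the Gaussian curvature of a surface of revolution in $(\mathbb{R}^3,\delta)$. Since $S_a$ is parametrized as $(aw(\varphi)\cos\theta, aw(\varphi)\sin\theta, a h_a(\varphi))$ with $h_a = f(a) z$, I set $U = aw$ and $V = afz$. The classical formula (do Carmo p.~162) reads
$$K(\delta) = \frac{V'\lf(U'V''-U''V'\ri)}{U\lf(U'^2+V'^2\ri)^2}.$$
Substituting $U' = aw'$, $V' = afz'$, etc., and using $U'^2+V'^2 = a^2 D^2$, I expect the computation to reduce, for $\varphi \in (0, l)$, to
$$K(\delta) = \frac{f^2 z'(w'z''-w''z')}{a^2 w D^4} = \frac{f^2}{a^2 D^4}\,\overline K,$$
where $\overline K = z'(w'z''-w''z')/w$ is the Gaussian curvature of the original surface \eqref{eq: rev}.

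Since \eqref{eq: rev} is smooth and strictly convex, $\overline K > 0$ on $(0,l)$, so $K(\delta) > 0$ on the complement of the two poles. The only subtlety is the extension to the poles $\varphi = 0$ and $\varphi = l$, where $w$ and $z'$ both vanish and the expression is formally $0/0$. I would handle these endpoints by the same limit argument used in the proof of Lemma~\ref{lem: 1}: since $z'/w$ extends continuously to $z''(0)/w'(0)$ at $\varphi = 0$ and $D(0) = |w'(0)|$, taking the limit yields
$$K(\delta)(0) = \frac{f^2 z''(0)^2}{a^2 w'(0)^4} > 0,$$
the positivity coming from $z''(0)\neq 0$, which is already used in the proof of Lemma~\ref{lem: 1} (the Gaussian curvature of \eqref{eq: rev} at the pole equals $z''(0)^2$). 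The pole $\varphi = l$ is treated identically.

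Overall I do not anticipate any real obstacle. Once the identity $K(\delta) = (f^2/a^2 D^4)\,\overline K$ is established, positivity on the interior is immediate from the convexity hypothesis, and the poles are handled by a routine continuous extension. The only place where care is required is the anisotropic scaling by $(a, af(a))$, and that is exactly what the factor $f^2/(a^2 D^4)$ encapsulates; in particular, the positivity is robust, independent of how large the ratio $f(a)$ becomes relative to $a$.
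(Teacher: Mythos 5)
Your proof is correct and follows essentially the same route as the paper: both arguments reduce to the identity $K(\delta)=\dfrac{f^2}{a^2D^4}\,\overline K$ (equivalently $\dfrac{-w''f^2}{a^2wD^4}$, since $z'(w'z''-w''z')=-w''$ by $w'w''+z'z''=0$), and then invoke convexity of the original surface \eqref{eq: rev}. The only cosmetic difference is that you use the extrinsic revolution-surface formula with the rescaled profile $(aw,afz)$ while the paper computes the intrinsic curvature of the orthogonal metric $\overline E\,d\varphi^2+\overline G\,d\theta^2$; your explicit treatment of the poles is a small bonus that the paper leaves implicit.
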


\begin{proof}
Let $d\overline s^2$ be the metric on $S_a$ induced by $\delta$.
The Gaussian curvature of the revolution surface given by \eqref{eq: rev} is $K_0=-\frac{w''}{w}>0$.
 On the other hand,
\begin{equation}\label{msiem}
 d\overline s^2 =  a^2((w'^2 + f^2 z'^2) d\varphi^2+ w^2 d\theta^2) = \overline Ed\varphi^2+ \overline G d\theta^2.
\end{equation}
The Gaussian curvature of $d\overline s^2$ is then given by
\begin{equation} \label{gcex1}
\begin{split}
K(\delta)
&=-\frac{1}{2\sqrt{\overline E\overline G}}\left(\left(\frac{\overline  E_\theta}{ \sqrt{\overline  E \overline G}}\right)_\theta+ \left(\frac{\overline  G_\varphi}{ \sqrt{\overline  E\overline  G}}\right)_\varphi\right)\\
&=-\frac{1}{a^2w\sqrt{w'^2+ h'^2}}\left(\frac{w'}{ \sqrt{w'^2+f^2z'^2}}\right)'\\
&=-\frac{1}{a^2w\sqrt{w'^2+ h'^2}}\left(\frac{w''}{ \sqrt{w'^2+f^2z'^2}}-\frac{w'(w'w''+f^2z'z'')}{ (w'^2+f^2z'^2)^{\frac{3}{2}}}\right)\\
&=-\frac{1}{a^2w\sqrt{w'^2+ h'^2}}\left(\frac{w''}{ \sqrt{w'^2+f^2z'^2}}-\frac{w'(w'w''-f^2w'w'')}
{ (w'^2+f^2z'^2)^{\frac{3}{2}}}\right)\\
&=\frac{-w''f^2}{a^2w(w'^2+f^2z'^2)^2}\\
&=\frac{K_0f^2}{a^2D^4}>0,
\end{split}
\end{equation}
where we have used $w'w''+z'z''=0$.
\end{proof}

\begin{proof}[Proof of Corollary \ref{co1}]
By Theorem \ref{thm1}, we just need to show that $g|_{S_a}$ has positive Gaussian curvature as $a>>1.$ By abuse of notations, we denote $\delta|_{S_a}$ simply by $\delta $ and $g|_{S_a}$ by $g$.  Noting that $g=\phi^4 \delta$. Similar to (2.4) in \cite{K} or (2.14) in \cite{GS}, we have
$$ K(g)= \phi^{-4}(K(\delta)-4\Delta_
S (\log \phi))$$
where $\Delta_S $ is the Laplacian on $(S_a, \delta|_{S_a})$.
We have the following formula:
$$\Delta_S \psi=\Delta _{\mathbb{R}^3}\psi-\nabla_{\mathbb{R}^3} ^2 \psi(n,n)-\overline H n(\psi) $$
where $\overline H$ and $n$ are the mean curvature and the unit outward normal vector of $S_a$ with respect to $\delta$ respectively. Letting $\psi=\log \phi$, we have
$$K(g)= \phi^{-4}(K(\delta)-4\Delta _{\mathbb{R}^3}\psi+4\nabla_{\mathbb{R}^3} ^2 \psi(n,n)+4\overline H n(\psi)).$$
Since $\Delta _{\mathbb{R}^3}\phi=0$, we have
$$\Delta_{\mathbb{R}^3}\psi= \frac{\Delta_{\mathbb{R}^3} \phi}{\phi}-\frac{|\nabla \phi|^2}{\phi^2}=-\frac{|\nabla \phi|^2}{\phi^2}\leq0.$$

 By direct calculations, in Euclidean coordinates, for $\psi=\psi(r)$, we have
$$ (\nabla_{\mathbb{R}^3}^2\psi)_{ij}= \psi'' \frac{x^ix^j}{r^2} + \psi' \frac{\delta_{ij}}{r}- \psi' \frac{x^ix^j}{r^3}.$$
Therefore
$$\nabla^3_{\mathbb{R}^3} \psi (n,n)= \frac{\psi''}{r^2} \langle X, n\rangle ^2+\frac{\psi'}{r} -\frac{ \psi'}{r^3}\langle X, n\rangle ^2$$
where $X$ is the position vector. For $\psi=\log \phi=\log (1+\frac{m}{2r})$, it is easy to see that the negative part of $\nabla ^2 _{\mathbb{R}^3} \psi(n,n)$ is of the order $O(\frac{1}{r^3})$:
$$(\nabla ^2 _{\mathbb{R}^3} \psi(n,n))_-=O\left(\frac{1}{r^3}\right).$$
By Lemma \ref{lem: bar H}, $\overline H= O\left(\frac{f}{aD}\right)$, thus $$\overline H n(\psi)=O\left(\frac{f}{ar^2D}\right).$$
By \eqref{eq: ass}, we have $$ \frac{1}{r^3}\leq \frac{f}{C_1ar^2D}.$$
We conclude that the negative part of $-4 \Delta _{S}(\log \phi)$ is of order $ O(\frac{f}{ar^2D})$.
Lemma \ref{lem: bar K} shows that $K(\delta)$ is of order $O(\frac{f^2}{ a^2D^4})$. As $f\leq D$ and $a\leq r$, it is easy to see that $K(\delta)$ dominates the negative part of $-4 \Delta _{S}(\log \phi)$ if $\frac{f^2}{a}=o(1)$. Hence the Gaussian curvature of $S_a$ is positive for $a$ large enough.
\end{proof}

\end{document}